\newtheorem*{proposition*}{Proposition}
\newtheorem{lemma}{Lemma}[section]
\newtheorem{proposition}{Proposition}
\newcommand{\ri}{\mathrm{i}}
\DeclareMathOperator*{\argmin}{arg\,min}
\begin{document}

\title{Dynamically Optimal Projection onto Slow Spectral Manifolds for Linear Systems}

\author{Florian Kogelbauer}
\email{floriank@ethz.ch} 
\affiliation{Department of Mechanical and Process Engineering, ETH Z{u}rich, CH-8092 Z{u}rich, Switzerland}

\author{Ilya Karlin}
\email{ikarlin@ethz.ch}
\affiliation{Department of Mechanical and Process Engineering, ETH Z{u}rich,  CH-8092 Z{u}rich, Switzerland}

\date{\today}

\begin{abstract}
We derive the dynamically optimal projection onto the linear slow manifold from a temporal variational principle. We demonstrate that the projection captures transient dynamics of the overall dissipative system and leads to a considerably improved fit of reduced trajectories compared to full trajectories. We illustrate these optimal model reduction properties on explicit examples, including the linear three-component Grad's moment system. 
\end{abstract}

\maketitle

\section{Introduction}

Slow manifolds play a crucial role in model reduction, equilibrium analysis, and data assimilation \cite{gorban2018model}. Originating from the foundational works of Tikhonov \cite{tikhonov1948dependence} and Fenichel \cite{fenichel1979geometric} within geometric singular perturbation theory, slow manifolds have become essential tools in fields such as atmospheric sciences \cite{lorenz1992slow}, fluid dynamics \cite{holmes1997low}, and classical mechanics \cite{mackay2004slow}. The modern framework of slow spectral submanifolds, introduced in \cite{cabre2003parameterization,cabre2003parameterizationII,cabre2005parameterization}, builds on stable spectral properties near equilibria and quasi-periodic solutions. This theory was further advanced in \cite{haller2016nonlinear} and has recently found applications in data-driven nonlinear dynamics \cite{cenedese2022data}.

The central idea of slow-manifold reduction is based on the separation of time scales, arising from the ordering of negative real parts in the system's linearization. Similar concepts appear in the theory of inertial manifolds for dissipative PDEs \cite{robinson2003infinite}, which contain the global attractor and are defined by the long-term behavior of the dynamical system \cite{temam2012infinite}.

Once the existence of a slow manifold is established and an approximate parametrization is available, an important question arises: how do general initial conditions relate to the reduced dynamics on the slow manifold? While reduced dynamics on the slow manifold can still exhibit transients due to repeated eigenvalues with Jordan blocks, methods such as spectral submanifolds do not capture the transient dynamics associated with trajectories approaching the slow manifold from off-manifold initial conditions.

Even in globally stable systems trajectories converge to a unique minimum but may display prolonged transient behavior. In linear systems, such transients are linked to the non-normality of the governing operator. Transient stable dynamics are pervasive across physics and engineering, with examples from atmospheric science \cite{farrell1996generalized}, polymer dynamics \cite{ilg2000validity, ilg2002canonical}, and thermal convection \cite{john2012transient}. In fluid models, long transients have even been proposed as an explanation for turbulence \cite{baggett1995mostly,he2000study}, although this remains debated, given the inherently nonlinear nature of turbulence \cite{waleffe1995transition}. Transient dynamics are also believed to play a significant role in hypocoercivity estimates \cite{achleitner2025hypocoercivity}.

The problem of projecting onto the slow manifold in the presence of transient dynamics has a long history \cite{gear2005projecting}. General model reduction methods often lead to oblique projections that account for non-normality in linear settings \cite{antoulas2005approximation,gugercin2008h_2,rowley2005model}. Proper orthogonal decomposition combined with Galerkin approximations is one widely used technique \cite{rowley2004model} in this context. Near the slow manifold, the correct affine projections become position-dependent and satisfy a nonlinear partial differential equation \cite{roberts1989appropriate}, which can be approximated through series expansions via computer algebra \cite{roberts2000computer}. More recently, projections respecting appropriate fibers for reduced-order modeling have been developed using constrained autoencoders \cite{otto2023learning}.

In this work, we introduce a novel projection onto the slow manifold for linear systems, which we term the \textit{dynamically optimal projection}. This projection optimally captures the transient dynamics of general trajectories in phase space. Derived from a temporal variational principle, the dynamical projection reduces to the standard orthogonal projection in the case of normal operators, ensuring consistency with the classical Riesz projection \cite{hislop2012introduction} onto the span of eigenvectors. Unlike existing oblique projections that address non-normality, such as the $\mathcal{H}_2$-optimal projection \cite{gugercin2008h_2}, which compares full and reduced trajectories via their temporal Laplace transforms, our approach presumes knowledge of the slow manifold as a linear combination of eigenvectors associated with slow modes. This combines the rational of the slow spectral submanifold with global phase-space projections for linear stable systems. Additionally, our variational principle is formulated in Hilbert space, relying solely on the spectral properties of the operator generating the semigroup.
We also contrast the dynamically optimal projection with other projection methods from statistical physics. One notable example is the thermodynamic projector \cite{GORBAN2004391}, designed for stable systems with a global Lyapunov function and primarily used in kinetic and chemical dynamics. The thermodynamic projector requires only the gradient of the entropy, imparting a degree of universality, and can define a vector field on general, possibly non-invariant subsets of phase space. In contrast, the dynamically optimal projection requires a more detailed input, particularly the spectral data of the underlying linear operator.

\section{Spectral Theory and Slow Manifolds}
Consider an evolution equation of the form  
\begin{equation}\label{main}
    \frac{dx}{dt} = Lx,
\end{equation}
for a closed linear operator $L$ on the complex Hilbert space $H$ with inner product $\langle.,.\rangle$. For a linear operator $ L:H \to H$, we denote its spectrum as $\text{spec}(L)$, its adjoint as $L^\dagger$ and recall that $\text{spec}(L^\dagger) = \text{spec}(L)^*$, see \cite{conway2019course}. The commutator of two linear operators $L_1$ and $L_2$ is denoted as $[L_1,L_2]=L_1L_2-L_2L_1$.

We assume that the spectrum of $L$ is stable and consists of a continuous part plus a set of isolated eigenvalues of finite multiplicity above the essential spectrum, called the \textit{point spectrum}, i.e., 
\begin{equation}\label{propsigma}
    \text{spec}(L) = \text{spec}_{\rm ess}(L) \cup \{\lambda_j\}_{1\leq j\leq n},\ \Re\text{spec}(L)< 0,
\end{equation}
such that $\Re\text{spec}_{\rm ess}(L)< \Re\{\lambda_j\}_{1\leq j\leq n}$, where $n\in\mathbb{N}\cup \{\infty\}$. We assume that the eigenvalues are counted with multiplicity and that they ordered by real part such that $\Re\lambda_i\leq\Re\lambda_j$ for $i<j$. Furthermore, we assume that $L$ restricted to the eigenvectors associated to the point spectrum is diagonal, i.e., has a trivial Jordan-block structure. The case of repeated eigenvalues and generalized eigenvectors 
can be treated analogously and is omitted for notational simplicity. 

As mentioned before, operators satisfying \eqref{propsigma} appear in the context of fluctuations-dissipation theory as the Fokker--Planck operator of stochastically forced systems \cite{risken1996fokker} and kinetic theory \cite{cercignani1988boltzmann}. The case of a pure point spectrum, $\text{spec}_{\rm ess}(L)=\varnothing$, in particular finite-dimensional systems is also covered by  considerations below. 

The stability of the spectrum, $\Re \text{spec}(L)<0$, implies that the associated semi-group $e^{tL}$ is exponentially stable. In fact, the converse is also true \cite{engel2000one}. For kinetic systems, this corresponds to the increase of the quadratic Lyapunov function $x\mapsto -\langle x, x\rangle$, which is interpreted as the entropy of the system. In particular, any solution converges to zero as $t\to\infty$, but might exhibit large temporal transients before decaying. 

In the following, we will mostly be interested in the case when $L$ is non-normal, i.e., when $[L,L^\dagger]\neq 0$. For a treatment of the general spectral properties of non-normal operators derived as perturbations to multiplication operators, we refer to \cite{ljance1970completely}. This operator class is of particular interest in kinetic theory as regular perturbations to the transport operator \cite{kogelbauer2024rigorous}.

The eigenvectors $\{\hat{x}_j\}_{1\leq j \leq n}$ associated to the discrete eigenvalues, the slow modes compared to the essential spectrum, span an invariant hyperplane of the linear dynamical system \eqref{main}. Henceforth, we will call the hyperplane spanned by the eigenvectors the \textit{slow manifold} and denote it as $\mathcal{M}_{\rm slow}^n$. The separation of the slow manifold $\mathcal{M}_{\rm slow}^n$ from the dynamics associated to the essential spectrum is characterized by different time scales: The contributions to a general solution of the essential spectrum will decay faster as compared to the contributions of the point spectrum. 

Within the full slow manifold, the ordering of the eigenvalues by real part induces itself a hierarchy of time scales, i.e., attraction rates, and implies the existence of a nested family of invariant, slow manifolds,
\begin{equation}
    \mathcal{M}_{\rm slow}^1\subset  \mathcal{M}_{\rm slow}^2\subset ... \subset  \mathcal{M}_{\rm slow }^n. 
\end{equation}

The knowledge of the reduced model 
induced by operator $L$ on a slow manifold 
is only the first step, as one needs to know \textit{how} to project a general initial condition onto the slow manifold.  
As elaborated in the introduction, general solutions to \eqref{main} under the stability assumption \eqref{propsigma} may exhibit large transient dynamics for non-normal operators $L$, i.e., short- to medium-time growth properties and deviations from equilibrium before decaying exponentially fast. In general, reduced models 
are incapable of capturing these transient dynamics accurately.  Below, we will derive a projection of a generic initial condition onto the slow invariant manifold by minimizing the total-in-time deviation of a full trajectory to a solution on the slow manifold in a mean-square sense.

\section{Dynamically Optimal projection from a Variational Principle}

\begin{figure}
    \centering
\includegraphics[width=1\linewidth]{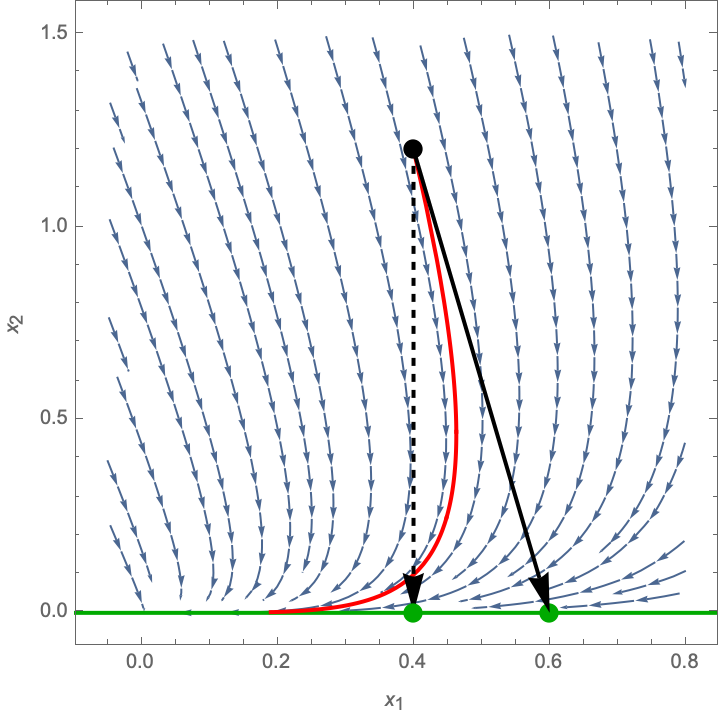}
    \caption{Illustration of the dynamical projection for system  \eqref{dyn2D} ($\alpha=5$, $\gamma=1$) applied to the initial condition $x_0=(0.4,1.2)$ for which the trajectory to the full system is shown in red. The  dynamically optimal projection (solid arrow) moves the initial condition on the slow manifold (x-axis in green) farther away from the global equilibrium compared to the orthogonal projection (dashed arrow).}
    \label{fig_proj_2D}
\end{figure}

Let $\{\hat{x}_j\}_{1\leq j\leq n} $ be a collection of slow modes spanning the slow manifold $\mathcal{M}_{\mathrm{slow}}^n$. The total dynamical error of a trajectory starting on the slow manifold compared to a general trajectory is given by,
\begin{equation}\label{E}
    \mathcal{E}(x_0,\xi ) = \frac{1}{2}\int_{0}^{\infty} \| e^{tL}x_0-e^{tL}  x_{\rm slow}(\xi )\|^2\, dt,
\end{equation}
for any initial condition $x_0\in H$ and a parameter vector $\xi \in\mathbb{C}^n$, where
\begin{equation}\label{slowmf}
    x_{\rm slow}(\xi ) = \sum_{j=1}^n \xi_{j} \hat{x}_j,
\end{equation}
is an initial condition on the slow manifold for the vector $\xi = (\xi_1,...,\xi_n)$. The reduced dynamics on the slow manifold for this parametrization is simply given by,
\begin{equation}\label{reddyn}
    \frac{d\xi_j}{dt} = \lambda_j \xi_j. 
\end{equation}
The question to be addressed here is: 
\emph{Which initial condition on the slow manifold corresponds to a generic initial condition of the system such that the error \eqref{E} will be minimal?}

Expression \eqref{E} is a well-defined, convex quadratic form in $\xi $, see Appendix \ref{propertiesE}, and hence has a unique minimum,
\begin{equation}\label{eq:min_problem}
   \xi^{\rm min}(x_0) := \argmin_{\xi \in\mathbb{C}^n} \mathcal{E}(x_0,\xi ). 
\end{equation}
Let us define the $n\times n$ spectrally-weighted Gramian matrix $G$ with the elements,
\begin{equation}\label{GX}
    G_{ij} = \frac{\langle\hat{x}_i ,\hat{x}_j\rangle}{\lambda_i+\lambda_j^*},\ 1\leq i,j \leq n,
\end{equation}
and mention its Hermitian symmetry, $G^\dagger=G$. Moreover, matrix $G$ is negative semi-definite: $\xi \cdot G\xi^*\le 0$, for $\xi\in\mathbb{C}^n$, see Appendix \ref{propertiesE}.

%
With the spectrally-weighted Gramian matrix \eqref{GX}, the explicit solution to the minimization problem \eqref{eq:min_problem} is found as follows: 
\begin{proposition}\label{prop:minimizer}
    The unique minimizer \eqref{eq:min_problem} reads,
\begin{equation}\label{betamin}
    \xi^{\rm min}_i(x_0) = \sum_{j=1}^n \left[\left(G^{T}\right)^{-1}\right]_{ij} \langle (L+\lambda_j^*)^{-1}x_0,\hat{x}_j \rangle,
\end{equation}
for $1\le i\le n$. 
\end{proposition}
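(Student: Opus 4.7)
The plan is to treat \eqref{eq:min_problem} as a complex least-squares problem over $L^2([0,\infty);H)$ and solve its normal equation. Since each $\hat{x}_j$ is an eigenvector, the trial trajectory collapses to $e^{tL}x_{\rm slow}(\xi)=\sum_j \xi_j e^{t\lambda_j}\hat{x}_j$, making $\mathcal{E}(x_0,\xi)$ a genuine quadratic form in $\xi$. Combined with the convexity established in Appendix \ref{propertiesE}, the unique minimizer is characterized by the Wirtinger stationarity conditions $\partial_{\overline{\xi_k}}\mathcal{E}(x_0,\xi)=0$ for $k=1,\dots,n$. After interchanging differentiation with integration (justified by the exponential decay of all integrands under $\Re\text{spec}(L)<0$), these read
\begin{equation*}
\sum_{i=1}^n \xi_i \int_0^\infty e^{t(\lambda_i+\lambda_k^*)}\langle \hat{x}_i,\hat{x}_k\rangle\,dt = \int_0^\infty e^{t\lambda_k^*}\langle e^{tL}x_0,\hat{x}_k\rangle\,dt.
\end{equation*}

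The first main step is to identify the left-hand side with $-(G^T\xi)_k$: each scalar integral evaluates to $-1/(\lambda_i+\lambda_k^*)$ by stability, which is exactly $-G_{ik}$ from definition \eqref{GX}. The second main step is to convert the right-hand side into a resolvent of $L$. Writing the scalar $e^{t\lambda_k^*}$ as $e^{t\lambda_k^* I}$ commuting with $e^{tL}$, the integrand becomes $\langle e^{t(L+\lambda_k^*I)}x_0,\hat{x}_k\rangle$; the operator identity $\int_0^\infty e^{tA}x_0\,dt = -A^{-1}x_0$, applied to $A=L+\lambda_k^*I$ (valid because $\Re\text{spec}(L+\lambda_k^*I)<0$ and $-\lambda_k^*\notin\text{spec}(L)$), yields $-\langle(L+\lambda_k^*)^{-1}x_0,\hat{x}_k\rangle$. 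Canceling signs gives $(G^T\xi)_k=\langle(L+\lambda_k^*)^{-1}x_0,\hat{x}_k\rangle$, and inversion of $G^T$—legitimate since Appendix \ref{propertiesE} makes $G$ strictly negative definite once the $\hat{x}_j$ are linearly independent—delivers \eqref{betamin}.

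The principal obstacle I anticipate is bookkeeping of complex conjugates: the precise placement of $\lambda_j$ versus $\lambda_j^*$ and of $\xi_i$ versus $\overline{\xi_i}$ in the normal equation depends on the chosen sesquilinearity convention, and one must track which slot of $\langle\cdot,\cdot\rangle$ is conjugate-linear in order to obtain $G^T$ (rather than $G$ or $G^\dagger$) in the final formula. A secondary technical point is ensuring that the Bochner integral $\int_0^\infty e^{t(L+\lambda_k^*I)}x_0\,dt$ converges in $H$ for possibly unbounded $L$; this is furnished by the spectral assumption \eqref{propsigma} together with exponential stability of the semigroup $e^{tL}$, which provides a uniform bound of the form $\|e^{tL}\|\leq M e^{-\omega t}$ for some $\omega>0$ via Hille--Yosida. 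Once these analytical subtleties are discharged, the computation is purely algebraic.
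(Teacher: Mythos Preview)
Your proposal is correct and follows essentially the same route as the paper. The paper first evaluates the temporal integrals to write $\mathcal{E}$ explicitly as $-\tfrac{1}{2}\xi\cdot G\xi^*+\Re[I\cdot\xi^*]+\text{const}$ and then differentiates via a real/imaginary-part lemma, whereas you differentiate first (Wirtinger) and then integrate; likewise the paper obtains the resolvent by moving $e^{tL}$ to the adjoint side and integrating by parts, while you invoke the Laplace--resolvent identity $\int_0^\infty e^{tA}\,dt=-A^{-1}$ directly on $A=L+\lambda_k^*I$. These are cosmetic reorderings of the same computation, and your bookkeeping of conjugates leading to $G^T$ matches the paper's.
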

\noindent The proof of Proposition \ref{prop:minimizer} is given in Appendix \ref{minexplicit}. 
Note that, the minimizer \eqref{betamin} is well-defined thanks to the stability assumption on the spectrum, $-\lambda^*\notin \text{spec}(L)$.

With the explicit form of the minimizer $\xi^{\rm min}$  \eqref{betamin}, we can define a linear operator
on $H$ as follows:
 \begin{equation}\label{defPdyn}
    \mathbb{P}_{\rm DOP} x =  \sum_{i=1}^n\sum_{j=1}^n\hat{x}_i\left[\left(G^{T}\right)^{-1}\right]_{ij} \langle (L+\lambda_j^*)^{-1}x,\hat{x}_j \rangle.
\end{equation}
Moreover, we have the following:
\begin{proposition}\label{prop:projector}
Linear operator $\mathbb{P}_{\rm DOP}$ \eqref{defPdyn} is a projection,
\begin{equation}\label{eq:PP=P}
\mathbb{P}_{\rm DOP}^2=\mathbb{P}_{\rm DOP}.
\end{equation}
\end{proposition}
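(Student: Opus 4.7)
The cleanest route is to exploit the variational origin of $\mathbb{P}_{\rm DOP}$ rather than to square the explicit formula \eqref{defPdyn} directly. First I would observe the two structural facts that together force idempotency: (i) the range of $\mathbb{P}_{\rm DOP}$ is contained in $\mathcal{M}_{\rm slow}^n$, since by construction
\begin{equation}
\mathbb{P}_{\rm DOP} x = \sum_{i=1}^n \xi_i^{\rm min}(x)\, \hat{x}_i = x_{\rm slow}(\xi^{\rm min}(x)),
\end{equation}
and (ii) $\mathbb{P}_{\rm DOP}$ fixes the slow manifold pointwise. Given (i) and (ii), $\mathbb{P}_{\rm DOP}^2 x = \mathbb{P}_{\rm DOP}(\mathbb{P}_{\rm DOP} x) = \mathbb{P}_{\rm DOP} x$ is immediate, so the entire task reduces to establishing (ii).

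To prove (ii), I would argue variationally: if $y \in \mathcal{M}_{\rm slow}^n$, write $y = \sum_{k=1}^n \alpha_k \hat{x}_k = x_{\rm slow}(\alpha)$. Then the functional in \eqref{E} satisfies $\mathcal{E}(y, \alpha) = 0$, and since $\mathcal{E}(y, \cdot) \geq 0$ and is strictly convex (by the uniqueness statement already used to define $\xi^{\rm min}$ via the Gramian $G$), the unique minimizer must be $\xi^{\rm min}(y) = \alpha$. Hence $\mathbb{P}_{\rm DOP} y = x_{\rm slow}(\alpha) = y$.

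As an independent verification I would also carry out the direct algebraic check: for a basis element $\hat{x}_k$, the eigenrelation $L\hat{x}_k = \lambda_k \hat{x}_k$ gives $(L+\lambda_j^*)^{-1}\hat{x}_k = (\lambda_k+\lambda_j^*)^{-1}\hat{x}_k$, so
\begin{equation}
\langle (L+\lambda_j^*)^{-1}\hat{x}_k, \hat{x}_j \rangle = \frac{\langle \hat{x}_k, \hat{x}_j \rangle}{\lambda_k + \lambda_j^*} = G_{kj} = (G^T)_{jk}.
\end{equation}
Substituting into \eqref{defPdyn} and collapsing the $j$-sum via $\sum_j [(G^T)^{-1}]_{ij}(G^T)_{jk} = \delta_{ik}$ yields $\mathbb{P}_{\rm DOP}\hat{x}_k = \hat{x}_k$, confirming (ii) by a computation that does not invoke the variational principle.

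The only genuine obstacle I anticipate is a bookkeeping one, namely keeping the sesquilinear convention of $\langle\cdot,\cdot\rangle$ and the placement of complex conjugates on $\lambda_j^*$ consistent between the definition \eqref{GX} of $G$, the formula \eqref{betamin} for the minimizer, and the resolvent computation above; the Hermitian symmetry $G^\dagger = G$ noted after \eqref{GX} is what guarantees that $G^T$ is invertible precisely when $G$ is, so no additional nondegeneracy hypothesis is needed beyond what Proposition \ref{prop:minimizer} already requires. Everything else is formal manipulation.
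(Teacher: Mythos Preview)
Your proposal is correct, and your algebraic verification is precisely the paper's proof: in Appendix~\ref{projpropapp} the paper computes $I_j(\xi\cdot\hat{x}) = [G^T\xi]_j$ via the same resolvent identity $(L+\lambda_j^*)^{-1}\hat{x}_k = (\lambda_k+\lambda_j^*)^{-1}\hat{x}_k$, then collapses $(G^T)^{-1}G^T$ to conclude $\mathbb{P}_{\rm DOP}^2=\mathbb{P}_{\rm DOP}$. Your variational argument for step~(ii)---that $\mathcal{E}(x_{\rm slow}(\alpha),\alpha)=0$ forces $\xi^{\rm min}=\alpha$ by strict convexity---is a clean conceptual alternative that the paper alludes to in the main text (``if the trajectory starts \dots\ on the slow manifold, then the cumulative error \eqref{E} vanishes'') but does not formalize in the appendix; it has the advantage of making the projection property transparent without any index manipulation, at the cost of invoking the uniqueness/convexity analysis from Appendix~\ref{propertiesE}.
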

\noindent The projection property \eqref{eq:PP=P} is proved in Appendix \ref{projpropapp}.

We call  $\mathbb{P}_{\rm DOP}$ \eqref{defPdyn}  the \textit{dynamically optimal projection} (DOP) associated to the slow manifold.
Clearly, the range of $\mathbb{P}_{\rm DOP}$ is the whole slow manifold \eqref{slowmf} by construction, while its kernel is given by,
\begin{equation}
\ker\mathbb{P}_{\rm DOP} = \{x\in H: (L+\lambda_j^*)^{-1}x\perp \hat{x}_{j},\ 1\leq j\leq n\}. 
\end{equation}
Let us summarize the properties of DOP: Given a general initial condition $x_0$ to system \eqref{main}, the dynamical projection \eqref{defPdyn} returns the unique initial condition $\mathbb{P}_{\rm DOP}x_0$ on the slow manifold that minimizes the cumulative difference of the full trajectory to its counterpart on the slow manifold. Intuitively, the solution of the optimization problem induces a projection by the minimality of the cumulative error of a full trajectory compared to a point of the parametrization on the slow manifold. Indeed, if the trajectory starts with an initial condition on the slow manifold, then the cumulative error \eqref{E} vanishes.

Let us take a closer look at the two ingredients of $\mathbb{P}_{\rm DOP}$. First, we emphasize that for a generic closed operator $L$, the spectrally-weighted Gramian \eqref{GX} does not simplify further since eigenvectors corresponding to different eigenvalues need not be orthogonal to each other. However, if $L$ is normal, we have, $L^\dagger\hat{x}_j = \lambda^*_j \hat{x}_j$, and eigenvectors corresponding to different eigenvalues are orthogonal \cite{conway2019course}. In this case, the spectrally-weighted Gramian ${G}$  \eqref{GX} reduces to a diagonal matrix, while the dynamically optimal projection becomes the canonical orthogonal projection, see Appendix \ref{Lnormal}.

Finally, let us comment that, for a general, not necessarily normal operator $L$, the projection $\mathbb{P}_{\rm DOP}$ \eqref{defPdyn} does not commute with the operator $L$, since the vectors $\hat{x}_i$ are not necessarily the eigenvectors of the adjoint operator $L^\dagger$. This distinguishes the DOP from the classical Riesz projection \cite{hislop2012introduction} which was recently used to project the (non-normal) linearized kinetic Boltzmann operator onto  hydrodynamic modes in \cite{kogelbauer2024rigorous}. Indeed, the Riesz projection onto a hyperplane spanned by eigenvectors of $L$ can be characterized as the unique projection that commutes with the operator $L$. A discussion of the Riesz projection through commutativity is provided in Appendix \ref{charRiesz}. The dynamically optimal projection can thus be viewed as an alternative to the Riesz projection which differs from it for non-normal operators $L$.

\begin{figure}
    \centering
\includegraphics[width=1\linewidth]{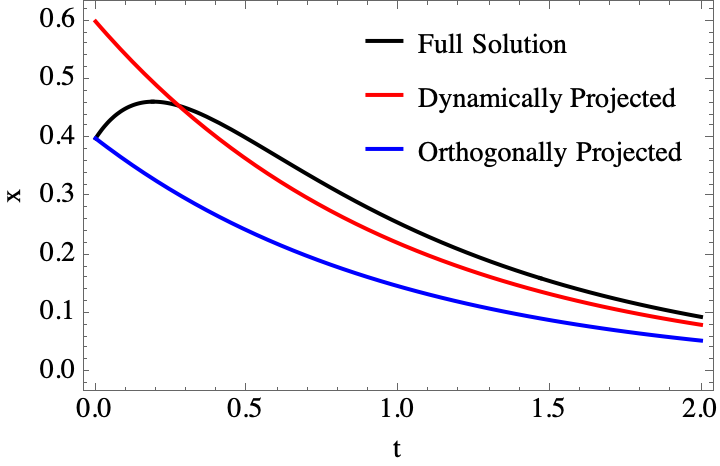}
    \caption{Comparison of the $x$-components of the full solution (black) to the dynamically projected trajectory (red) and the orthogonally projected trajectory (blue) of system \eqref{dyn2D}.}
    \label{fig_proj_2D_comp}
\end{figure}

\section{Examples}

In this section, we discuss two examples of the dynamic projection: a two-dimensional shear flow and the linear three-component Grad's moment system. For these examples, the spectrum is known analytically and the dynamic projection can be calculated explicitly. 

\subsection{Two-Dimensional Shear System}

As a first toy example, consider the two-dimensional linear system,
\begin{equation}\label{dyn2D}
    \frac{dx}{dt} = Lx,\quad L = \begin{pmatrix}
        -1 & \gamma \\
        0 & -\alpha 
        \end{pmatrix},
\end{equation}
where {$x=(x_1,x_2)^T$}, for a damping parameter $\alpha>1$ and a shear parameter $\gamma>0$. The eigenvectors of the non-normal matrix $L$ are,
\begin{equation}
    \text{eig}(L )=\left\{\begin{pmatrix}
        1 \\ 0
    \end{pmatrix}, \frac{1}{\sqrt{(1-\alpha)^2 + \gamma^2}}\begin{pmatrix}
        \gamma \\ 1-\alpha 
    \end{pmatrix} \right\},
\end{equation}
corresponding to the eigenvalues $-1$ and $-\alpha$, respectively. The slow manifold, associated to the eigenvalue $-1$, is therefore simply given by the $x_1$-axis. The resolvent appearing in \eqref{defPdyn} reads
\begin{equation}
    (L-1)^{-1} = \frac{(-1)}{2(1+\alpha)}\begin{pmatrix}
        1+\alpha & \gamma \\
        0 & 2 
    \end{pmatrix},
\end{equation}
while the spectrally-weighted Gramian \eqref{GX} reduces to a constant, $G = -1/2$. Thus, the dynamic projection \eqref{defPdyn} takes the explicit form,
\begin{equation}\label{dynpro2D}
    \mathbb{P}_{\rm DOP}x =  \left(x_1 + \frac{\gamma}{1 + \alpha}x_2\right)\begin{pmatrix}
        1 \\ 0
    \end{pmatrix}.
\end{equation}
For $\gamma=0$, the projection \eqref{dynpro2D} reduces to the standard orthogonal projection as expected. The dynamical projection for system \eqref{dyn2D} is shown in Figure \ref{fig_proj_2D}. Figure \ref{fig_proj_2D_comp} shows a direct comparison of the $x_1$-component of a full trajectory of system \eqref{dyn2D} to the dynamically projected and the orthogonally projected trajectories. We emphasize that the dynamically projected trajectory follows the overall trend towards equilibrium of the full solution much more closely over the full range of the time evolution. 
Geometrically, the functional \eqref{E} for this two-dimensional example seeks an initial condition on the slow manifold such that the weighted area enclosed by a full solution and the projected solution is minimal.

\subsection{Three-Component Grad's Moment System}
\label{GradExpl}

\begin{figure}
    \centering
    \includegraphics[width=0.9\linewidth]{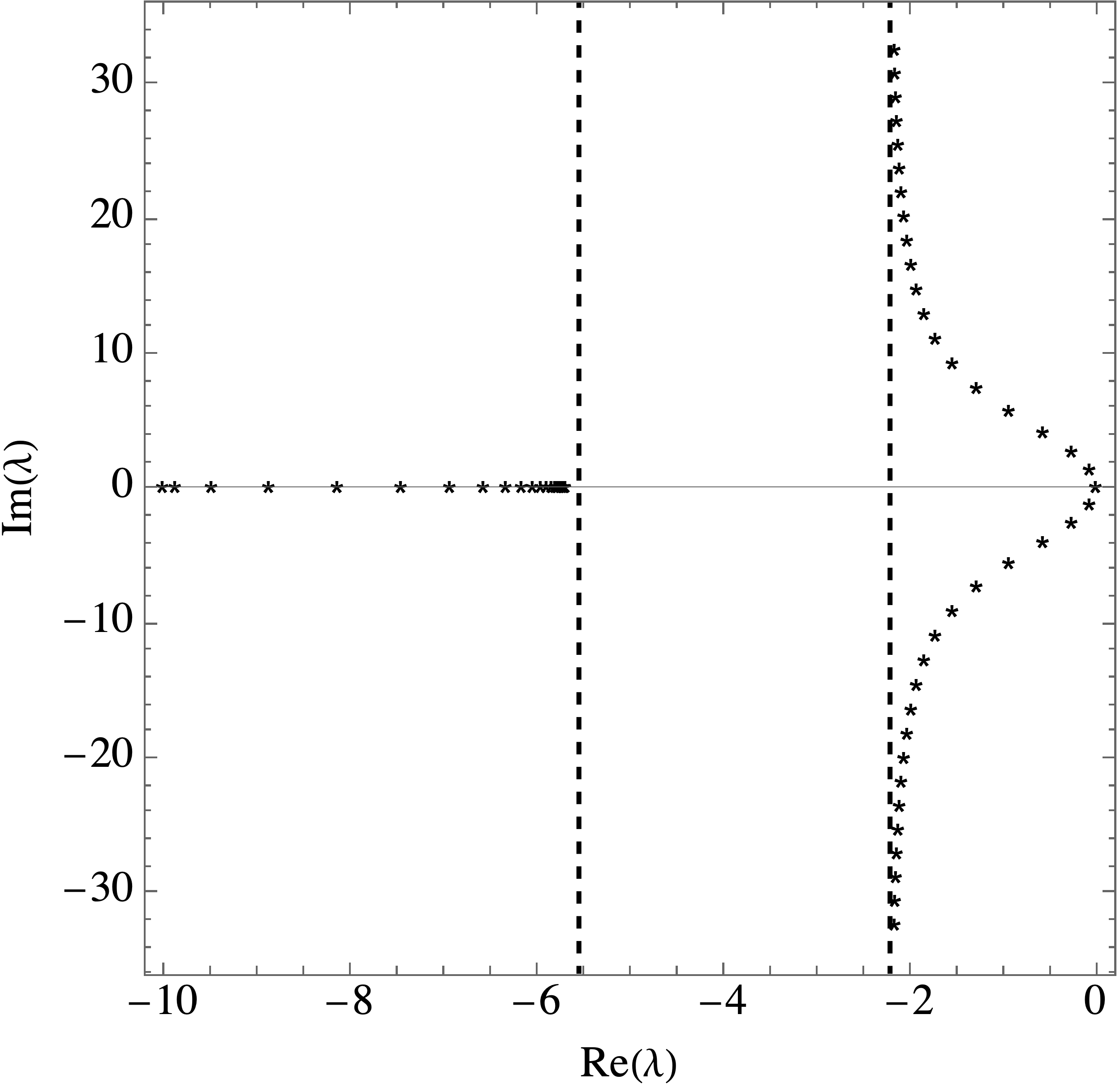}
    \caption{
    Spectrum of the  linear Grad system \eqref{eqGrad} for $\varepsilon=0.1$ and various wave numbers. The fast real modes (star symbol on the real axis) accumulate at $-\frac{5}{9\varepsilon}$ (vertical dashed line at $\Re{\lambda} = -50/9$), while the slow complex conjugated modes (bell-shaped star symbol) accumulate at $\Re\lambda = -\frac{2}{9\varepsilon}$ (dashed line at $\Re{\lambda} = -20/9$) as $k\to\infty$.}
    \label{pspec_Grad3D}
\end{figure}

Consider the three-component Grad's system,
\begin{equation}\label{eqGrad}
\begin{split}
&\frac{\partial p}{\partial t}=-\frac{5}{3}\frac{\partial u}{\partial x},\\
&\frac{\partial u}{\partial t}=-\frac{\partial p}{\partial x}-\frac{\partial \sigma}{\partial x},\\
&\frac{\partial \sigma}{\partial t}=-\frac{4}{3}\frac{\partial u}{\partial x}-\frac{1}{\varepsilon}\sigma,
\end{split}
\end{equation}
for the pressure $p$, the velocity $u$ and the stress $\sigma$, while $\epsilon>0$ is a parameter representative of the Knudsen number. System \eqref{eqGrad} is a classical model in kinetic theory \cite{grad1949kinetic} whose Chapman--Enskog series has been summed explicitly in \cite{gorban1996short}. Equation \eqref{eqGrad} is defined on the whole real line with suitable decay at infinity. In the frequency space, system \eqref{eqGrad} reads,
\begin{equation}
\frac{d}{d t}\left(\begin{array}{c}
     \hat{p}\\
     \hat{u}\\
     \hat{\sigma}
\end{array}\right) = L_k\left(\begin{array}{c}
     \hat{p}\\
     \hat{u}\\
     \hat{\sigma}
\end{array}\right)
\end{equation}

for the wave-number dependent matrix,
\begin{equation}\label{Lk}
    L_k = \begin{pmatrix}
        0 &  -\frac{5}{3}\ri k & 0\\
-\ri k & 0 & -\ri k\\
0 & -\frac{4}{3}\ri k & -\frac{1}{\varepsilon}
    \end{pmatrix},
\end{equation}
where $\ri=\sqrt{-1}$. We analyze the system \eqref{Lk} on the Hilbert space $H = \mathbb{C}^3$ with the complex inner product $\langle {v},{w}\rangle = {v}\cdot {w}^*$ for ${v},{w}\in\mathbb{C}^3$. The matrix $L_k$ is not normal since
\begin{equation}
    [L_k,L_k^\dagger] = \frac{1}{9\varepsilon}\left(
\begin{array}{ccc}
 16 k^2 \epsilon  & 0 & 11 k^2 \varepsilon  \\
 0 & -23 k^2 \epsilon  & 21 \ri k \\
 11 k^2 \epsilon  & -21 \ri k & 7 k^2 \varepsilon  \\
\end{array}
\right). 
\end{equation}
For a detailed spectral analysis of \eqref{Lk} along with its implications for the exact summation of the Chapman--Enskog series \cite{karlin2002hydrodynamics}, we refer to \cite{kogelbauer2020slow}. The characteristic polynomial of \eqref{Lk},
\begin{equation}\label{charLk}
P_{{k}}(\lambda)=-\lambda^3-\frac{1}{\varepsilon}\lambda^2-3k^2\lambda-\frac{5}{3}\frac{k^2}{\varepsilon},
\end{equation}
has a pair of complex conjugated eigenvalues $\{\lambda_{\rm  ac},\lambda_{\rm  ac}^*\}$, called acoustic modes, and one real eigenvalue $\lambda_{\rm diff}$, called generalized diffusion mode. All eigenvalues have negative real part, see Figure \ref{pspec_Grad3D}. In particular, $\text{spec}(L_k)=\text{spec}(L_k^\dagger)$. The eigenvectors of $L_k$ are given by 
\begin{equation}\label{defQ}
Q = \left(\begin{matrix}
-1-a_1b_1 & -1-a_2b_2 & -1-a_3b_3\\
\ri b_1 & \ri b_2 & \ri b_3\\
1 & 1 & 1
\end{matrix}\right),
\end{equation}
for 
\begin{equation}
a_j=\frac{\lambda_j}{k},\quad b_j=\frac{3}{4\varepsilon k}(1+\varepsilon \lambda_j),\quad j=1,2,3, 
\end{equation}
where $\lambda_1 = \lambda_{\rm ac}$, $\lambda_2 = \lambda_{\rm ac}^*$ and $\lambda_{3} = \lambda_{\rm diff}$. The slow manifold at each wave number is then given explicitly as the plane spanned by the two acoustic modes (the first two columns of \eqref{defQ}),
\begin{equation}
    \mathcal{M}_{\rm slow} = \text{span} \left\{\left(\begin{array}{c}
         -1-a_1b_1 \\
         \ri b_1 \\
         1\end{array}\right),\left(\begin{array}{c}
         -1-a_2b_2 \\
         \ri b_2 \\
         1
    \end{array}\right)\right\}.
\end{equation}
The reduced dynamics on the slow manifold in spectral coordinates \eqref{reddyn} is written,
\begin{equation}
    \frac{d\xi_{\rm ac}}{dt} = \lambda_{\rm ac} \xi_{\rm ac},\ \frac{d\xi_{\rm ac}^*}{dt} = \lambda_{\rm ac}^*\xi_{\rm ac}^*,
    \end{equation}
while the coordinate transform from spectral coordinates to the $(p,u)$-coordinates takes the form,
\begin{equation}
    H = \begin{pmatrix}
        -1 -  a_1b_1 & -1-a_2b-2 \\
        \ri b_1  & \ri b_2,
    \end{pmatrix}. 
\end{equation}
corresponding to the $p$- and $u$-components of the acoustic eigenvectors and its conjugate. 
The reduced model for the pressure and the velocity field at each wave number is then given by,
\begin{equation}\label{eq:Grad10CE}
    \frac{d}{dt}\left(\begin{array}{c}
         \hat{p}_k \\
         \hat{u}_k 
    \end{array}\right) = {T} \left(\begin{array}{c}
         \hat{p}_k \\
         \hat{u}_k 
    \end{array}\right),
\end{equation}
for the transport matrix,
\begin{equation}
    T = H^{-1}\Lambda H,\quad \Lambda = \begin{pmatrix}
        \lambda_{\rm ac} & 0 \\
         0 & \lambda_{\rm ac}^*
    \end{pmatrix}. 
\end{equation}
The system \eqref{eq:Grad10CE} was earlier derived by exact summation of the Chapman--Enskog expansion in \cite{gorban1996short}.
We also refer to \cite{kogelbauer2024rigorous} for a general treatment of the hydrodynamic closure for kinetic systems, including details on the coordinate change from spectral to physical coordinates and its relation to the Riesz projection. 
The resolvent at the negative eigenvalues can be readily calculated and takes the explicit form,
\begin{widetext}
\begin{equation}\label{resolventGrad}
    (L_k+\lambda^*)^{-1} = \frac{1}{6(\lambda^*)^2+10\varepsilon^2}\begin{pmatrix}
        -4\varepsilon k^2 + 4\lambda^* - 3\varepsilon(\lambda^*)^2 & 5\ri k - 3\ri k \lambda^* & 5\varepsilon k^2\\
        3\ri k - 3\ri \varepsilon\lambda^* & 3\lambda^* - 3\varepsilon (\lambda^*)^2 & -3\ri \varepsilon k \lambda^* \\
        4\varepsilon k^2 & -4\ri \varepsilon k \lambda^* & -5\varepsilon k^2 - 3\varepsilon (\lambda^*)^2 
    \end{pmatrix},
\end{equation}
\end{widetext}
for $\lambda\in \{\lambda_{ \rm ac},\lambda_{\rm ac}^*\}$, while the Gramian \eqref{GX} reads,
\begin{widetext}
\begin{equation}\label{GGrad}
    G = \begin{pmatrix}
        \frac{1}{2\lambda_{\rm ac}}(|1+a_1b_1|^2+|b_1|^2+1) & \frac{1}{2\Re\lambda_{\rm ac}} [(1+a_1b_1)^2+b_1^2+1] \\
        \frac{1}{2\Re\lambda_{\rm ac}} [(1+a_2b_2)^2+b_2^2+1] & \frac{1}{2\lambda_{\rm ac}^*}(|1+a_2b_2|^2+|b_2|^2+1)
    \end{pmatrix},
\end{equation}
\end{widetext}
where we have used that $a_1^*=a_2$ and $b_1^*=b_2$.
Combining \eqref{resolventGrad} with \eqref{GGrad} according to formula \eqref{defPdyn} then gives the dynamic projector for the three-component Grad system. 

Figure \ref{comp_proj_Grad3D} shows a comparison of pressure trajectories for the full system, the orthogonally projected initial condition and  the dynamically projected initial condition. As expected, the dynamically projected pressure trajectory follows the transient oscillations of the full solution much more closely. 
\begin{figure}[t]
    \centering
    \includegraphics[width=0.9\linewidth]{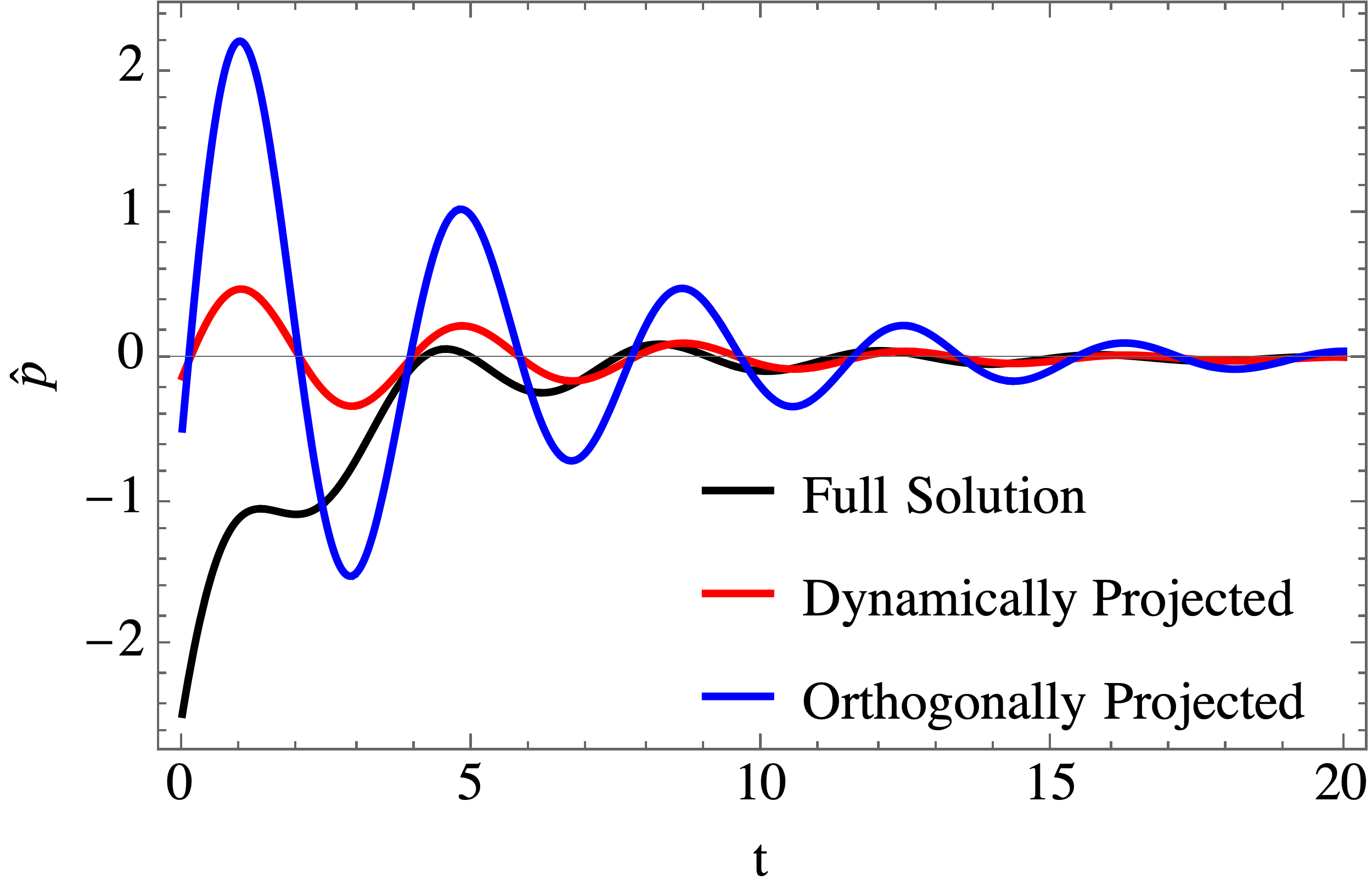}
    \caption{Comparison of the pressure components of the full solution (black) to the dynamically projected trajectory (red) and the orthogonally projected trajectory (blue) of system \eqref{eqGrad} with initial condition orthogonal to the slow manifold at wave number $k=1$. Even though the difference in initial pressure is comparably small, the orthogonally projected pressure trajectory exhibits much larger oscillations compared to the dynamically projected one.}
    \label{comp_proj_Grad3D}
\end{figure}

\section{Discussion}

We introduced a novel method to optimally project dynamics onto a slow manifold in linear systems using a temporal variational principle. Unlike traditional projection methods, which may fail to capture transient behavior, the presented approach minimizes the cumulative deviation between the full system's trajectory and its reduced counterpart over time. This method accounts for transient dynamics, particularly in systems governed by non-normal operators where transients can be significant. The projection reduces to the familiar orthogonal projection in normal systems, ensuring consistency, and it offers a mathematically explicit formulation based on spectral properties of the system's operator.

We demonstrated the effectiveness of the dynamically optimal projection through explicit examples, including a two-dimensional shear flow and the linear three-component Grad's system from kinetic theory. These case studies highlight how the new projection captures transient behaviors more accurately than standard projections, especially when dealing with systems exhibiting strong non-normality. Beyond its theoretical development, the projection has practical applications in reduced-order modeling, data assimilation, and the analysis of complex dynamical systems where transient dynamics play a critical role. Future work is anticipated to extend these ideas to non-invariant constraints and broader nonlinear contexts.

Potential applications of the dynamically optimal projection include problems where information about an entire subset of the full phase space is needed, such as sampling over a set of initial conditions or studying the evolution of specific regions of phase space. This is particularly relevant for optimal reduction on non-invariant submanifolds. A forthcoming paper will explore the application of the DOP to non-invariant constraints and their approximate reduced dynamics in greater detail.

A particularly relevant example of non-normal operators satisfying the assumptions in \eqref{propsigma} are kinetic Boltzmann-type operators,
\begin{equation}\label{LkBoltz}
    \mathcal{L}_{\bm{k}} = -\ri \bm{k}\cdot\bm{v}+L_{Q},
\end{equation}
where $L_{Q}^\dagger = L_{Q}$ is the linear collision operator and $\bm{k}$ is wave vector. Spectral properties of operators \eqref{LkBoltz} 
are discussed in, e.g., \cite{ellis1975first}. We stress that \eqref{LkBoltz} is non-normal and the degree of non-normality scales with wave number, $  [\mathcal{L}_{\bm{k}}, \mathcal{L}_{\bm{k}}^\dagger] = \mathcal{O}(|\bm{k}|)$. The three-component Grad systems discussed  in Section \ref{GradExpl} is a toy model for kinetic operators of the form \eqref{LkBoltz}, mimicking some of its pertinent properties. Recently, detailed  spectral information for the various linear kinetic operators, including the 
linear BGK  and Shakhov kinetic models have been derived \cite{kogelbauer2024exact,kogelbauer2024spectral}. These can serve as a basis for the application of the theory presented in this work.

\begin{acknowledgments}
This work was supported by the European Research Council (ERC) Advanced Grant  834763-PonD. 
Computational resources at the Swiss National Super  Computing  Center  CSCS  were  provided  under the grant s1286.
\end{acknowledgments}

\bibliographystyle{abbrv}
\bibliography{projection_bib}

\clearpage

\clearpage 

\appendix


\section{Properties of the quadratic form $\mathcal{E}$ \eqref{E}}
\label{propertiesE}

In this section, we expand and simplify the temporal integral expressions in \eqref{E} and prove the existence of a unique minimizer. We stress that the integral expression in \eqref{E} exists because of the stability assumption of the spectrum of $L$ and the resolvent estimate for the corresponding semi-group. Indeed, we recall that an eventually norm-continuous semigroup is exponentially stable if and only if $\Re\text{spec}(L)< 0$ , see \cite{engel2000one}.
For two vectors $\xi,\eta\in\mathbb{C}^n$ with $\xi=(\xi_1,...,\xi_n)$ and $\eta = (\eta_1,...,\eta_n)$, we denote the real standard inner product as 
\begin{equation}\label{eq:product}
    \xi\cdot\eta = \sum_{j=1}^n \xi_j \eta_j . 
\end{equation}
The cumulative error contains two relevant contributions. The first one corresponds to the internal dynamics on the slow manifold,
\begin{equation}\label{Einter}
\begin{split}
        \mathcal{E}_{\rm inter}(\xi ) & =  \frac{1}{2} \int_0^\infty \|e^{tL}x_{\rm slow}(\xi )\|^2\, dt,
\end{split}
\end{equation}
while the second contribution, given by an integrated inner product,
\begin{equation}\label{Etrans}
\begin{split}
        \mathcal{E}_{\rm trans}(x_0,\xi )  = -\Re\int_0^\infty \langle e^{tL}x_0,e^{tL}x_{\rm slow}(\xi )\rangle\, dt,
\end{split}
\end{equation}
corresponds to interaction of the dynamics on the slow manifold with a general trajectory of \eqref{main}. The full functional \eqref{E} is then given by,
\begin{equation}\label{Esum}
    \mathcal{E}(x_0,\xi ) = \mathcal{E}_{\rm inter}(\xi ) + \mathcal{E}_{\rm trans}(x_0,\xi ) + \frac{1}{2} \int_0^\infty \|e^{tL}x_{0}\|^2\, dt. 
\end{equation}
Since the last summand in \eqref{Esum} does not depend on $\xi $, the minimizer of $\mathcal{E}$ only depends on $\mathcal{E}_{\rm inter}$ and $\mathcal{E}_{\rm trans}$.

First, let us simplify the expressions \eqref{Einter} and \eqref{Etrans} by resolving the temporal integrals.
The internal dynamical contribution can be expanded as,
\begin{equation}\label{expandEinter}
\begin{split}
        \mathcal{E}_{\rm inter}(\xi ) & =  \frac{1}{2} \int_0^\infty \|e^{tL}x_{\rm slow}(\xi )\|^2\, dt\\
        & = \frac{1}{2} \sum_{j,k=1}^n \int_0^\infty \langle\xi_j e^{tL}\hat{x}_j,\xi_k e^{tL}\hat{x}_k\rangle\, dt\\
        & = \frac{1}{2} \sum_{j,k=1}^n \int_0^\infty \langle\hat{x}_j,\hat{x}_k\rangle \xi_j\xi_k^*e^{(\lambda_j+\lambda_k^*)t}\, dt\\
        & =  \frac{1}{2} \sum_{j,k=1}^n  \langle\hat{x}_j,\hat{x}_k\rangle \xi_j\xi_k^*\left[ \frac{e^{(\lambda_j+\lambda_k^*)t}}{\lambda_j+\lambda_k^*}\right]_{0}^{\infty} \\
        & = - \frac{1}{2} \sum_{j,k=1}^n\xi_j \left[\frac{\langle\hat{x}_j,\hat{x}_k\rangle }{\lambda_j+\lambda_k^*}\right]\xi_k^*\\
        & = -\frac{1}{2} \xi\cdot G\xi ^*,
\end{split}
\end{equation}
where
\begin{equation}\label{GXapp}
    G_{jk} = \frac{\langle\hat{x}_j ,\hat{x}_k\rangle}{\lambda_j+\lambda_k^*},
\end{equation}
is a spectrally-weighted Gramian matrix associated to the eigenvectors of $L$ as defined in \eqref{GX}. Note that we have only used the assumption $\Re\text{spec}(L)<0$ in the above calculations.

To evaluate the dynamical interaction contribution, we expand \eqref{Etrans} as 
\begin{equation}\label{expandEtrans}
\begin{split}
        \mathcal{E}_{\rm trans}(x_0,\xi ) & = -\Re\int_0^\infty \langle e^{tL}x_0,e^{tL}x_{\rm slow}(\xi )\rangle\, dt\\
         & = -\sum_{j=1}^n\Re\int_0^\infty \langle e^{tL}x_0,e^{\lambda_j t}\hat{x}_j\rangle \xi_j^*\, dt\\
         & = -\sum_{j=1}^n\Re\int_0^\infty \langle x_0, e^{\lambda_j t}e^{tL^\dagger}\hat{x}_j\rangle \xi_j^*\, dt\\
          & = -\sum_{j=1}^n\Re[\langle x_0, \eta_j \rangle\xi_j^*], 
\end{split}
\end{equation}
where we have defined the vector $\eta$ with the components,
\begin{equation}\label{eq:ethai}
    \eta_j =  \int_0^\infty e^{tL^\dagger}\hat{x}_j e^{\lambda_j t}\, dt.
\end{equation}
Integration by parts in \eqref{eq:ethai} gives,
\begin{equation}\label{eqeta}
    \begin{split}
    \eta_j & =  \int_0^\infty e^{tL^\dagger}\hat{x}_j e^{\lambda_j t}\, dt \\
    & = \frac{1}{\lambda_j}\left[ e^{tL^\dagger}\hat{x}_j e^{\lambda_j t} \right]_0^\infty - \frac{1}{\lambda_j}\int_0^\infty L^\dagger e^{tL^\dagger}\hat{x}_j e^{\lambda_j t}\, dt\\
     & = -\frac{\hat{x}_j}{\lambda_j} - \frac{1}{\lambda_j } L^\dagger \eta_j,
    \end{split}
\end{equation}
where we have again used that $\Re\text{spec}(L)<0$.  From \eqref{eqeta}, we infer the following linear equation for $\eta_j$:
\begin{equation}\label{eqeta2}
    (L^\dagger+\lambda_j)\eta_j = -\hat{x}_j. 
\end{equation}
Because of the stability assumption $\Re \text{spec}(L)<0$ and since $\text{spec}(L^\dagger)=\text{spec}(L)^*$, the complex number $-\lambda_j$ does not belong to the spectrum of $L^\dagger$. Hence, we can solve equation \eqref{eqeta2} to give,
\begin{equation}
    \eta_j = - (L^\dagger+\lambda_j)^{-1}\hat{x}_j.  
\end{equation}
Thus, expression \eqref{Etrans} consequently simplifies as,
\begin{equation}\label{Etranssimple}
\begin{split}
        \mathcal{E}_{\rm trans}(x_0,\xi ) & = \sum_{j=1}^n\Re[\langle x_0,  (L^\dagger+\lambda_j)^{-1}\hat{x}_j \rangle\xi_j^*]\\
        & = \sum_{j=1}^n\Re[\langle (L+\lambda_j^*)^{-1} x_0,  \hat{x}_j \rangle\xi_j^*]\\
        & = \Re[I \cdot \xi ^*],
\end{split}
\end{equation}
where we have defined a vector $I(x_0)$ with the components,
\begin{equation}\label{defI}
I_j(x_0) =  \langle (L+\lambda_j^*)^{-1} x_0,  \hat{x}_j \rangle,\ {1\leq j \leq n}. 
\end{equation}
Clearly, from  \eqref{expandEinter} and \eqref{Etranssimple}, the function $\xi \mapsto \mathcal{E}(x_0,\xi )$ defines a quadratic form for any $x_0\in H$. To see that $\mathcal{E}$ is a convex function in $\xi $, we observe that the second-order term of $\mathcal{E}$ is given by \eqref{Einter}, which is strictly positive as the integral of a norm for any $\xi \in\mathbb{C}^n$.  In particular, the spectrally-weighted Gramian matrix $G$ is negative semi-definite, as it follows from \eqref{expandEinter}.
Hence the quadratic part of $\mathcal{E}$ is positive definite and $\mathcal{E}$ is convex.\\

\section{Explicit expression of the minimizer}
\label{minexplicit}

In this section, we derive an explicit expression of the unique minimizer of the quadratic form $\xi\mapsto\mathcal{E}(x_0,\xi)$ on the slow manifold. To evaluate the complex derivative $\partial \mathcal{E}/ \partial \xi $, we recall the following lemma (proof is included for completeness):

\begin{lemma}\label{derlemma}
    Consider a quadratic form,
\begin{equation}\label{deff}
    f(\xi ) = \Re[w\cdot\xi ^*]- \frac{1}{2}\xi \cdot G\xi ^*,
\end{equation}
for a Hermitian matrix $G=G^\dagger$ and complex vectors $w=w_R+\ri w_I$,  $\xi =\xi _R+\ri\xi _I$. Then the complex derivative of $f$ is given by
\begin{equation}
    \frac{\partial f}{\partial \xi } = w - G^T\xi . 
\end{equation}
\end{lemma}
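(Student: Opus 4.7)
The statement is a direct computation; the only subtlety is pinning down the convention for the complex derivative of the real-valued function $f$ and invoking $G^\dagger = G$ at the right moment. My first step is to observe that Hermiticity of $G$ makes $\xi\cdot G\xi^*$ real, so $f$ is genuinely real-valued, and it is natural to interpret $\partial f/\partial\xi$ via the Wirtinger convention $\partial/\partial\xi_j := \partial/\partial\xi_{R,j} + \ri\,\partial/\partial\xi_{I,j}$, which coincides with $2\,\partial/\partial\xi^*_j$ when $\xi$ and $\xi^*$ are treated as formally independent.

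With this convention, the computation splits cleanly. Writing $\Re[w\cdot\xi^*] = \tfrac{1}{2}(w\cdot\xi^* + w^*\cdot\xi)$, only the first summand contributes under $\partial/\partial\xi^*$, yielding $\tfrac{1}{2}w$; the factor of two inherent in the convention then recovers the linear coefficient $w$. For the quadratic term, $\partial(\xi\cdot G\xi^*)/\partial\xi_m^* = \sum_j G_{jm}\xi_j = (G^T\xi)_m$, and the prefactor $-\tfrac{1}{2}$ in $f$ cancels against the factor of two from the derivative convention, leaving $-G^T\xi$. Summing both contributions gives $\partial f/\partial\xi = w - G^T\xi$.

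As a sanity check that avoids Wirtinger calculus, I would alternatively decompose $\xi = \xi_R + \ri\xi_I$, $w = w_R + \ri w_I$, and $G = G_R + \ri G_I$ with $G_R$ real symmetric and $G_I$ real skew-symmetric (both enforced by $G = G^\dagger$). Expanding the Hermitian form,
\begin{equation*}
\xi\cdot G\xi^* = \xi_R^T G_R \xi_R + \xi_I^T G_R \xi_I + 2\,\xi_R^T G_I \xi_I ,
\end{equation*}
I compute $\partial f/\partial \xi_{R,j}$ and $\partial f/\partial \xi_{I,j}$ separately and recombine them as $\partial f/\partial\xi = \partial f/\partial\xi_R + \ri\,\partial f/\partial\xi_I$. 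After elementary rearrangement the result factors as $w - (G_R - \ri G_I)\xi = w - G^*\xi$, and Hermiticity $G^\dagger = G$ yields $G^* = G^T$, recovering the claimed formula.

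The obstacle is notational rather than mathematical: the care needed is to ensure that the convention adopted here matches the one used in Proposition~\ref{prop:minimizer}, so that setting $\partial f/\partial\xi = 0$ produces the linear system $w = G^T\xi$. Under invertibility of $G^T$, guaranteed by the negative-definiteness of $G$ established in Appendix~\ref{propertiesE}, this inverts to the closed-form minimizer~\eqref{betamin}.
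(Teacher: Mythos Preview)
Your proof is correct. Your secondary ``sanity-check'' route---splitting $\xi$ and $G$ into real and imaginary parts and recombining---is essentially the paper's own argument, with one organizational difference: the paper keeps $G$ complex and uses the symmetric/antisymmetric split $G_S=\tfrac{1}{2}(G+G^T)$, $G_A=\tfrac{1}{2}(G-G^T)$, so that $G_S-G_A=G^T$ drops out at the end without ever invoking Hermiticity, whereas you decompose $G=G_R+\ri G_I$ and use Hermiticity up front to enforce $G_R^T=G_R$, $G_I^T=-G_I$, arriving at $G^*=G^T$ in the last step. Both are the same computation in different coordinates.

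Your primary Wirtinger argument is a genuinely cleaner alternative: once one recognizes the paper's convention $\partial/\partial\xi=\partial/\partial\xi_R+\ri\,\partial/\partial\xi_I=2\,\partial/\partial\xi^*$, treating $\xi$ and $\xi^*$ as independent collapses the whole calculation to two one-line differentiations and avoids the bookkeeping of real and imaginary parts entirely. This buys brevity and makes the appearance of $G^T$ (rather than $G$) transparent from $\partial(\xi\cdot G\xi^*)/\partial\xi_m^*=\sum_j G_{jm}\xi_j$.
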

\begin{proof}
First, we expand 
\begin{equation}
    \begin{split}
        \Re[w\cdot\xi ^*] & = w_R\cdot\xi _R+w_I\cdot \xi_I,\\
        \xi \cdot G\xi ^* & = \xi _R\cdot G\xi _R+\xi _I\cdot  G\xi _I\\
        &\qquad -\ri\xi _R\cdot G\xi _I+\ri\xi _I\cdot G\xi _R,
    \end{split}
\end{equation}
and set
\begin{equation}
    G_S = \frac{1}{2}(G+G^T),\ G_A =\frac{1}{2}(G-G^T),
\end{equation}
to obtain,
\begin{equation}
    \begin{split}
        \frac{\partial f}{\partial\xi _R} & = w_R - G_S\xi _R+\ri G_A\xi _I,\\
         \frac{\partial f}{\partial\xi _I} & = w_I - G_S\xi _I-\ri G_A\xi _R.
    \end{split}
\end{equation}
Consequently, the complex derivative of \eqref{deff}
is given by
\begin{equation}
\begin{split}
        \frac{\partial f}{\partial \xi } & = \frac{\partial f}{\partial\xi _R} + \ri  \frac{\partial f}{\partial\xi _I}\\
         & = (w_R - G_S\xi _R+\ri G_A\xi_I) + \ri (w_I - G_S\xi _I-\ri G_A\xi _R)\\
          & = w- G^T\xi _R-\ri G^T\xi _{I}\\
          & = w- G^T\xi. 
\end{split}
\end{equation}
\end{proof}

Applying Lemma \ref{derlemma} to $w = I (x_0)$ and $G$ as defined in \eqref{GXapp}, it follows that the critical point of the function \eqref{E} satisfies the equation 
\begin{equation}\label{derivative}
    I(x_0)-G^T\xi = 0.
\end{equation}
As shown before, the critical point indeed defines a minimum thanks to the convexity of $\mathcal{E}$. The unique minimizer of \eqref{E} is then given explicitly as the solution to \eqref{derivative}:
\begin{equation}\label{betaminapp}
    \xi^{\rm min}(x_0) =  (G^{T})^{-1} I (x_0).
\end{equation}

\section{Projection property of DOP \eqref{defPdyn}}
\label{projpropapp}

Let us prove that parameter value that minimizes the cumulative dynamical error,
\begin{equation}
    \xi^{\rm min}(x_0) := \argmin_{\xi \in\mathbb{C}^n} \mathcal{E}(x_0,\xi ),
\end{equation}
indeed, defines a projection onto the slow manifold as
\begin{equation}\label{defPslowapp}
\mathbb{P}_{\rm DOP} x =  \sum_{i=1}^n\sum_{j=1}^n\hat{x}_i\left[\left(G^{T}\right)^{-1}\right]_{ij} \langle (L+\lambda_j^*)^{-1}x,\hat{x}_j \rangle.
\end{equation}
To ease notation, we bundle the eigenvectors into a vector of Hilbert space elements, $\hat{x} = (\hat{x}_1,...,\hat{x}_n)$; elements of the slow manifold \eqref{slowmf} are written more compactly using the real standard inner product \eqref{eq:product},
\begin{equation}
    \label{eq:para_slow}
    x_{\rm slow} = \xi\cdot\hat{x}.
\end{equation}
First, we evaluate the vector \eqref{defI} on the elements of the slow manifold \eqref{eq:para_slow},
%
%
\begin{equation}\label{projI}
\begin{split}
       I_j \left(\xi\cdot\hat{x}\right) & 
        = \sum_{k=1}^{n} \xi_k\langle (L+\lambda_j^*)^{-1}\hat{x}_k, \hat{x}_j\rangle \\
       & = \sum_{k=1}^{n}  \left[\frac{\langle\hat{x}_k,\hat{x}_j\rangle}{\lambda_k+\lambda_j^*}\right]\xi_k\\
       & = [G^T\xi ]_j.
\end{split}
\end{equation}
This can be written in a form of an identity,
\begin{equation}
    \label{eq:para-identity}
    \xi=\left[G^T\right]^{-1}I\left(\xi\cdot\hat{x}\right),
\end{equation}
for any $\xi \in\mathbb{C}^n$, while operator \eqref{defPslowapp} takes the form,
\begin{equation}
    \label{eq:proj_simp}
    \mathbb{P}_{\rm DOP} x =\hat{x}\cdot\left[G^T\right]^{-1}I\left(x\right).
\end{equation}
Second, applying the operator \eqref{eq:proj_simp} twice and using \eqref{eq:para-identity} for $\xi  =  \left[G^{T}\right]^{-1} I (x)$, we find, 
\begin{equation}
\begin{split}
        \mathbb{P}^2_{\rm dyn}x & =  \mathbb{P}_{\rm DOP}\hat{x}\cdot \left[G^{T}\right]^{-1} I (x)\\
        & = \hat{x}\cdot \left[G^{T}\right]^{-1} I \left(\hat{x}\cdot \left[G^{T}\right]^{-1} I (x)\right)\\
        & = \hat{x}\cdot \left[G^{T}\right]^{-1} G^{T} \left[G^{T}\right]^{-1} I (x)\\
        & = \hat{x}\cdot \left[G^{T}\right]^{-1} I (x)\\
        & = \mathbb{P}_{\rm DOP}x.
\end{split}
\end{equation}
This proves Proposition \ref{prop:projector}.

\section{DOP reduces to the canonical orthogonal projection for $L$ normal}\label{Lnormal}

In this section, we assume that operator $L$ is normal, $[L,L^\dagger]=0$, and recall that eigenvectors of a normal operator corresponding to different eigenvalues are orthogonal. For the sake of presentation, we also assume that the eigenvalues $\lambda_j$ are simple; thus, $\langle \hat{x}_i,\hat{x}_j\rangle=\delta_{ij}$. The case of repeated eigenvalues and generalized eigenvectors leads to Jordan blocks in the matrix $G$ and can be treated analogously. 

As an immediate consequence, the spectrally-weighted Gramian matrix \eqref{GX} becomes diagonal,
\begin{equation}\label{eq:Gdiag}
   G_{ij}=\frac{1}{2\Re\lambda_i}\delta_{ij},
\end{equation}
where $\delta_{ij}$ is Kronecker's delta.
Let us recall that, if $\hat{x}_j$ is unit-length eigenvector of $L$ with eigenvalue $\lambda_j$, then it is also the eigenvector of the adjoint $L^\dagger$ with the eigenvalue $\lambda_j^*$,
\begin{equation}
    L^\dagger \hat{x}_j = \lambda_j^* \hat{x}_j. 
\end{equation}
Consequently, we find in \eqref{defI},
\begin{equation}\label{Inormal}
\begin{split}
        I_j(x) & = \langle x, (L^\dagger+\lambda_j)^{-1} \hat{x}_j \rangle\\
        & = \langle x, (\lambda_j^*+\lambda_j)^{-1} \hat{x}_j \rangle \\
        & = \frac{\langle x, \hat{x}_j \rangle}{\lambda_j+\lambda_j^*}\\
        & = \frac{\langle x, \hat{x}_j \rangle}{2\Re\lambda_j}.
\end{split}
\end{equation}
With \eqref{eq:Gdiag} and \eqref{Inormal}, we obtain in \eqref{defPdyn},
\begin{equation}\label{eq:Pdyn_normal}
\begin{split}
      \mathbb{P}_{\rm DOP} x & = 
\sum_{i=1}^n\sum_{j=1}^n\hat{x}_i\left(2\Re \lambda_i\delta_{ij}\right)\frac{\langle x, \hat{x}_j \rangle }{2\Re\lambda_j}\\
       & = \sum_{j=1}^n \langle x,\hat{x}_j\rangle \hat{x}_j.
\end{split}
\end{equation}
Thus, the dynamically optimal projection becomes the standard orthogonal projection if operator $L$ is normal.

\section{Characterization of the Riesz projection through commutativity}\label{charRiesz}

In this section, we discuss properties of the Riesz projection onto a finite set of eigenvectors associated to isolated finite-order eigenvalues of a closed operator $L$. For completeness, we will show that the Riesz projection can be characterized through the commutation property with $L$. More specifically, we show that if a projection onto a finite set of linearly independent vectors commutes with $L$, then the projection is defined by eigenvectors of the adjoint of $L$.

Let $\hat{x}_1,...,\hat{x}_n$ be linearly independent unit-length eigenvectors of a closed operator,
\begin{equation}\label{eq:eigL}
    L \hat{x}_j  = \lambda_j \hat{x}_j .
\end{equation}
For any compact subset of the spectrum, $\Lambda\subseteq \text{spec}(L)$, consisting only of isolated eigenvalues of finite multiplicity, the Riesz projection is defined as  
\begin{equation}\label{defRiesz}
    \mathbb{P}_\Lambda = -\frac{1}{2\pi \ri}\oint_{\Gamma (\Lambda)} (z-L)^{-1} \, dz,
\end{equation}
where $\Gamma(\Lambda)$ is any simple contour encircling the set $\Lambda$ once in the positive direction and lying entirely in the resolvent set of $L$. The linear operator \eqref{defRiesz} indeed defines a projection \cite{hislop2012introduction}, its range is exactly the linear subspace spanned by the eigenvectors associated to $\Lambda$. Moreover, the Riesz projection \eqref{defRiesz} commutes with the operator $L$,
\begin{equation}
    \label{eq:Riesz_commutator}
    [\mathbb{P}_\Lambda, L]=0.
\end{equation}

On the other hand, a general linear projection onto the subspace spanned by $\{\hat{x}_1,...,\hat{x}_n\}$ is defined through a linear mapping ${M}\in L(H,\mathbb{C}^n)$ such that
\begin{equation}\label{projM}
    \mathbb{P}_{M}x = \sum_{j=1}^n \hat{x}_j  ({M}x)_j.
\end{equation}
By the Riesz representation theorem \cite{conway2019course}, there exist vectors $\theta_1,..,\theta_n\in H$ such that
\begin{equation}\label{representation}
    \mathbb{P}_{M}x = \sum_{j=1}^n  \hat{x}_j  \langle x,\theta_j\rangle. 
\end{equation}
The linear mapping \eqref{projM} defines a projection (involution) provided that
\begin{equation}\label{restrict}
\langle {\hat{x}_i}, \theta_j\rangle =\delta_{ij},
\end{equation}
i.e., if $\{\theta_{j}\}$ is a dual set of $\{\hat{x}_j \}$. Moreover, the following commutation property characterizes the projection uniquely:

\begin{lemma}
    Let $\mathbb{P}$ be a linear projection onto the linearly independent eigenvectors $\hat{x}_1,...,\hat{x}_n$ of a closed operator $L$. Assume that $\mathbb{P}$
 commutes with $L$. Then the vectors $\theta_1,..,\theta_n$ in the representation \eqref{representation} are eigenvectors of the adjoint of $L$, 
 \begin{equation}\label{theta}
     L^\dagger\theta_j = \lambda_ j^* \theta_j. 
 \end{equation}
 \end{lemma}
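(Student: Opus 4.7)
The plan is to apply the commutation identity $\mathbb{P}L = L\mathbb{P}$ to a generic vector $x\in\mathrm{dom}(L)$, expand both sides using the representation \eqref{representation} together with the eigenvalue equation \eqref{eq:eigL}, and then read off the conditions on $\theta_j$ by exploiting the linear independence of the $\hat{x}_j$. Concretely, I would compute
\begin{equation}
L\mathbb{P}x=\sum_{j=1}^n L\hat{x}_j\,\langle x,\theta_j\rangle=\sum_{j=1}^n\lambda_j\hat{x}_j\,\langle x,\theta_j\rangle,
\end{equation}
while on the other side
\begin{equation}
\mathbb{P}Lx=\sum_{j=1}^n\hat{x}_j\,\langle Lx,\theta_j\rangle.
\end{equation}
Subtracting and invoking $[\mathbb{P},L]=0$ yields $\sum_j\hat{x}_j\bigl(\langle Lx,\theta_j\rangle-\lambda_j\langle x,\theta_j\rangle\bigr)=0$.

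Next, because $\{\hat{x}_1,\dots,\hat{x}_n\}$ is a linearly independent set, each scalar coefficient must vanish, giving
\begin{equation}\label{eq:plan_scalars}
\langle Lx,\theta_j\rangle=\lambda_j\langle x,\theta_j\rangle=\langle x,\lambda_j^{\ast}\theta_j\rangle,
\end{equation}
for every $x\in\mathrm{dom}(L)$ and each $j=1,\dots,n$. The identity \eqref{eq:plan_scalars} says precisely that the linear functional $x\mapsto\langle Lx,\theta_j\rangle$ extends to a bounded functional on $H$, namely $x\mapsto\langle x,\lambda_j^{\ast}\theta_j\rangle$. By the definition of the adjoint of a densely defined closed operator, this is exactly the statement that $\theta_j\in\mathrm{dom}(L^{\dagger})$ together with $L^{\dagger}\theta_j=\lambda_j^{\ast}\theta_j$, which is \eqref{theta}.

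The main subtlety to watch is the domain issue: since $L$ is only closed (possibly unbounded), one cannot simply move $L$ across the inner product as though it were bounded. The rigorous justification passes through the characterization of $\mathrm{dom}(L^{\dagger})$ as exactly those vectors $\theta$ for which $x\mapsto\langle Lx,\theta\rangle$ is bounded on $\mathrm{dom}(L)$; once \eqref{eq:plan_scalars} is in hand this is automatic. A minor bookkeeping point is that $\mathrm{dom}(L)$ must be dense (which is implicit in having a well-defined adjoint), so that the bounded extension of $x\mapsto\langle x,\lambda_j^{\ast}\theta_j\rangle$ to all of $H$ is unique. Apart from these standard functional-analytic checks, the argument is purely algebraic and the linear independence of the $\hat{x}_j$ is the only structural input needed beyond the commutation hypothesis.
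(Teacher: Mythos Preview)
Your proposal is correct and follows essentially the same route as the paper: compute $L\mathbb{P}x$ and $\mathbb{P}Lx$ using the representation and the eigenvalue relation, invoke linear independence of the $\hat{x}_j$ to isolate scalar identities, and conclude \eqref{theta}. If anything, your treatment is slightly more careful than the paper's, which moves $L$ across the inner product to $L^\dagger$ at the outset without first verifying $\theta_j\in\mathrm{dom}(L^\dagger)$; your use of the defining property of the adjoint to close this gap is the right way to handle the unbounded case.
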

 \begin{proof}
From 
 \begin{equation}
 \begin{split}
     \mathbb{P}L x &= \sum_{j=1}^n \langle Lx,\theta_j \rangle \hat{x}_j \\
     & = \sum_{j=1}^n \langle x,L^\dagger\theta_j \rangle \hat{x}_j ,
\end{split}
 \end{equation}
 and 
 \begin{equation}
 \begin{split}
          L\mathbb{P}x & = \sum_{j=1}^n \langle x,\theta_j \rangle L\hat{x}_j \\
        & = \sum_{j=1}^n \lambda_j\langle x,\theta_j \rangle \hat{x}_j ,
 \end{split}
 \end{equation}
 where we have used that $\hat{x}_j$'s are eigenvectors of $L$ \eqref{eq:eigL}, it follows from the linear independence of the $\hat{x}_j$'s that
 \begin{equation} 
     \langle x,L^\dagger \theta_j \rangle = \lambda_j \langle x, \theta_j\rangle =\langle x,\lambda_j^*\theta_j\rangle. 
 \end{equation}
 Since $x$ is arbitrary, the statement \eqref{theta} follows. 
 \end{proof}
 Thus, the commutation property \eqref{eq:Riesz_commutator} defines the Riesz projection uniquely as the projection with the null-space 
 $\ker \mathbb{P}=\{x\in H:x\perp\theta_j,\ L^\dagger\theta_j=\lambda^*_j\theta_j,\ 1\le j\le  n\}$ and range ${\rm im}\, \mathbb{P}={\rm span}\{\hat{x}_j ,\ 1\le j\le n\}$.

Finally, we note that the Riesz representation \eqref{representation} of the DOP \eqref{defPdyn} corresponds to the following dual set,
 \begin{equation}
     \label{eq:dynProj_R}
     \theta_i=\sum_{j=1}^n\left[\left(G^{T}\right)^{-1}\right]_{ij} (L^\dagger+\lambda_j)^{-1}\hat{x}_j,\ 1\le i\le n.
 \end{equation}
 Indeed,
 \begin{equation}
 \begin{split}
          \langle\hat{x}_k,\theta_i\rangle & = \sum_{j=1}^n \left[\left(G^{T}\right)^{-1}\right]_{ij} 
          \langle \hat{x}_k, (L^\dagger+\lambda_j)^{-1}\hat{x}_j\rangle \\
        & =  \sum_{j=1}^n \left[\left(G^{T}\right)^{-1}\right]_{ij} 
          \langle (L+\lambda_j^*)^{-1}\hat{x}_k, \hat{x}_j\rangle\\
        &= \sum_{j=1}^n \left[\left(G^{T}\right)^{-1}\right]_{ij} 
         \left[ \frac{\langle \hat{x}_k, \hat{x}_j\rangle}{\lambda_k+\lambda_j^*}\right]\\
         & = \sum_{j=1}^n \left[\left(G^{T}\right)^{-1}\right]_{ij} 
          G_{kj}\\
         & = \sum_{j=1}^n \left[\left(G^{T}\right)^{-1}\right]_{ij} 
         \left[ G^T \right]_{jk}\\
         &=\delta_{ik},\ 1\leq i,k\leq n.
 \end{split}
 \end{equation}
 From the above considerations, it becomes apparent that DOP \eqref{defPdyn} is different from the Riesz projection \eqref{defRiesz} for non-normal operators $L$, while at the same time, the DOP reduces to the Riesz projection and becomes the standard orthogonal projection \eqref{eq:Pdyn_normal} when $L$ is normal.

\clearpage

\end{document}